\newtheorem{theorem}{Theorem}[section]
\newtheorem{lemma}[theorem]{Lemma}
\newtheorem{proposition}[theorem]{Proposition}
\newtheorem{remark}[theorem]{Remark}
\theoremstyle{definition}
\newtheorem{definition}[theorem]{Definition}
\numberwithin{equation}{section}
\def\E{\mathbb E}
\def\R{\mathbb R}
\def\M{\mathbb M}
\begin{document}

\title{Two problems related to prescribed  curvature
measures}


\author{Yong Huang}
\address{Wuhan Institute of Physics and Mathematics, Chinese Academy of Sciences, Wuhan 430071,  China}
\email{huangyong@wipm.ac.cn}
\thanks{The  author was supported in part by NSF Grant  11001261.}

\subjclass[2000]{Primary 35J60, 35J65; Secondary 53C50}



\keywords{hypersurfaces, Curvature measure,  Curvature equations.}

\begin{abstract}
Existence  of convex body with prescribed generalized curvature
measures is discussed, this result is obtained by making use of
Guan-Li-Li's innovative techniques. In surprise, that methods has
also brought us to promote Ivochkina's $C^2$ estimates for
prescribed curvature equation in \cite{I1, I}.

\end{abstract}

\maketitle

\section{Introduction and main results}
Curvature measure plays fundamental role in the theory of convex
bodies, which is closely related to the differential geometry of
convex hypersurfaces and integral geometry. It has been extensively
studied, see Schneider's book \cite{S}. As Guan, Li and Ma etc.
\cite{GL, GLM1} and their references. Here we give the
interpretation of the problem from point of partial differential
equation for example see \cite{GLM1}. We view $\M$ as a graph over
$\mathbb{S}^n,$ and write $X(x)=\rho(x)x,$ $x\in \mathbb{S}^n,$
$\forall X\in \M.$ Therefore the problem of prescribed (n-k)th
curvature measure can be reduced to the following curvature
equation:
\begin{equation}\label{CM1}
\sigma_k(A)=\frac{f\rho^{1-n}}{\sqrt{\rho^2+|\nabla \rho|^2}},
\end{equation}
where $f>0$ is the given function on $\mathbb{S}^n.$ Moreover,
equation \eqref{CM1} can be expressed as differential equations on
radial function $\rho$ and position vector $X,$
\begin{equation}\label{CM}
\sigma_k(A)=|X|^{-(n+1)}f\left(\frac{X}{|X|}\right)\langle X,
~\nu\rangle\triangleq\phi(X)\langle X, ~\nu\rangle,
\end{equation}
where $\nu$ is the unit outer normal of $\M,$ and
$\lambda=(\lambda_1, \cdots, \lambda_n)$ is the principal curvature
of $M$ at point $X,$
$$
\sigma_k(A)=\sigma_k(\lambda)=\sum\limits_{1\leq i_1<\cdots< i_k\leq
n}\lambda_{i_1} \cdots \lambda_{i_k}.
$$
Alexandrov problem is the zero order curvature measure, which can
also be considered as a counterpart to Minkowski problem. Its
regularity in elliptic case was proved by Pogorelov \cite{P} for
$n=2$ and by Oliker \cite{O} for higher dimension case. The
degenerate case was obtained by Guan-Li \cite{GL2}.

Following ideas from  \cite{CNS, I, Tr1} etc., let us define the
$k-$admissible hypersurfaces:
\begin{definition}
For $1\leq k\leq n,$ let $\Gamma_k$ be a cone in $\R^n$ determined
by
$$
\Gamma_k=\{\lambda\in \R^n: \sigma_l(\lambda)>0,~l=1,2,\cdots,k \}.
$$
A smooth  hypersurface $\M$ is called $k-$admissible if $(\lambda_1,
\lambda_2, \cdots,\lambda_n)\in \Gamma_k$ at every point $X\in\M.$
\end{definition}

There is a difficulty issue around equation \eqref{CM}: the lack of
some appropriate a priori estimates for admissible solutions due to
the appearance of gradient term at right side. That problem has been
open for many years \cite{G}. More recently, Guan-Li-Li\cite{GLL}
have obtained the $C^2$ a priori estimates for the admissible
k-convex starshaped solutions to prescribing (n-k)th curvature
measures for $1\leq k<n.$

 In this paper, we are interested in to consider  the
following problems
\begin{equation}\label{GCM1}
\sigma_k(A)=\langle X, ~\nu\rangle^p\phi(X),
\end{equation}
where $2\leqq k\leqq n.$

Our first motivation is from the existence of convex body with
prescribed curvature measures. Equation \eqref{GCM1} is the equation
of prescribing $(n-k)$th curvature measure for $p=1.$ In particular,
Guan-Li-Li\cite{GLL} has given a open problem for most general
problem in remark 3.5. our result may implies that their conjecture
is correct. Caffarelli-Nirenberg-Spruck\cite{CNS} had been
considered some kind of curvature equation including \eqref{GCM1}
for $p=0$, their $C^1$ estimates depends on barrier conditions.
However, that is not case for our problem. Thus, we consider
\eqref{GCM1}  for $p\neq  0.$ Moreover, we can obtain $C^1$
estimates for a class of curvature equations including \eqref{GCM1}
and quotient curvature equations, its idea is from \cite{GLM1}. Now,
we can state the main theorem.

\begin{theorem}\label{Thm1}
Suppose $\phi(X)$ is a smooth positive function in $\M,$ $ 2\leq
k\leq n£¬,$ $0\neq p\leq 1.$ Then there is a  unique smooth
admissible hypersurface $\M$ satisfying \eqref{GCM1}.
\end{theorem}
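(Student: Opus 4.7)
The plan is to establish existence via the method of continuity, deforming from a round-sphere solution of
\begin{equation*}
\sigma_k(A) = \langle X, \nu \rangle^p \phi_t(X), \qquad t \in [0,1],
\end{equation*}
at $t=0$ (with $\phi_0$ a suitable positive constant so that a sphere $\rho \equiv R$ solves the equation) to the target equation at $t=1$ with $\phi_1 = \phi$. Openness along the path is handled by the standard linearization at an admissible starshaped solution, where the linearized $\sigma_k$ operator is uniformly elliptic and the maximum principle gives invertibility in the appropriate H\"older space. Closedness is the substance of the theorem: I must prove $C^0$, $C^1$ and $C^2$ \emph{a priori} bounds for any admissible solution, uniform in $t$; Evans--Krylov and Schauder theory then yield the requisite $C^{2,\alpha}$ compactness, and a bootstrap gives smoothness.

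For the $C^0$ bound, writing $X = \rho(x) x$ on $\mathbb{S}^n$, at the maximum of $\rho$ the principal curvatures satisfy $\lambda_i \le 1/\rho$ and $\langle X, \nu \rangle = \rho$, so \eqref{GCM1} forces $\binom{n}{k}\rho_{\max}^{-k} \ge \rho_{\max}^p \phi_{\min}$, giving $\rho_{\max} \le C$; the dual inequality at the minimum gives $\rho_{\min} \ge c > 0$. For the $C^1$ bound I would follow the Guan--Li--Ma strategy indicated in the excerpt: consider an auxiliary function of the form $\log\langle X,\nu\rangle + \Psi(\rho)$ with $\Psi$ chosen so that, upon taking logarithms of \eqref{GCM1}, the $p$-dependent term contributes with a favorable sign when $p \le 1$. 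Because $\langle X, \nu \rangle = \rho^2 / \sqrt{\rho^2 + |\nabla\rho|^2}$, a positive lower bound on $\langle X,\nu\rangle$ is equivalent to an upper bound on $|\nabla \rho|$, so the gradient estimate reduces to ruling out an interior minimum of the auxiliary function below a computable threshold.

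The main obstacle is the $C^2$ estimate on the principal curvatures. I would adapt the auxiliary-function technique of Guan--Li--Li, which handled the pure prescribed-curvature-measure case $p=1$, by examining a maximum of $\log \lambda_{\max} + \Phi(\langle X,\nu\rangle, \rho)$ on $\M$ and differentiating \eqref{GCM1} twice in a principal frame. The extra factor $\langle X, \nu \rangle^p$ produces new third-order terms when differentiated along $\M$; these must be absorbed by combining the concavity of $\sigma_k^{1/k}$ on $\Gamma_k$, the Codazzi identities, and the already-established bounds on $\rho$ and $|\nabla \rho|$. The restriction $p \le 1$, $p \ne 0$, is precisely what makes the bad terms manageable and, as advertised in the abstract, also allows one to sharpen the Ivochkina-type estimates of \cite{I1,I}; I expect this step to require a careful tuning of $\Phi$ more delicate than the one used in \cite{GLL}.

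Once uniform $C^{2,\alpha}$ bounds are in hand, the continuity method closes and existence follows. For uniqueness, suppose $\M_1, \M_2$ are two admissible solutions with radial functions $\rho_1, \rho_2$ and consider the ratio $\rho_1/\rho_2$. At its maximum the outer normals and tangent planes of the two surfaces coincide (after a dilation), so a pointwise comparison of the principal curvatures, together with the monotonicity of the right-hand side of \eqref{GCM1} in $\langle X,\nu\rangle$ given by the condition $p \le 1$, forces equality of the two solutions via the strong maximum principle. This completes the proposed argument.
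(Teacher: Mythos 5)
Your overall architecture---continuity method starting from a round sphere at $t=0$, openness by linearization, closedness by $C^0$--$C^1$--$C^2$ a priori bounds plus Evans--Krylov, and uniqueness by comparing radial functions at the maximum of their ratio---coincides with the paper's. The $C^0$ and $C^1$ steps are in the right spirit (the paper's $C^1$ test function is $\gamma(|X|^2/2)-\log\langle X,\nu\rangle$ with $\gamma(s)=\alpha p/s$, and in fact works for every $p\neq 0$, not only $p\le 1$).

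The genuine gap is in the $C^2$ estimate, which is the entire content of the theorem beyond \cite{GLM1, GLL}. You propose to absorb the bad third-order terms coming from differentiating $\langle X,\nu\rangle^p$ by ``combining the concavity of $\sigma_k^{1/k}$ on $\Gamma_k$, the Codazzi identities, and the already-established bounds on $\rho$ and $|\nabla\rho|$.'' Concavity of $\sigma_k^{1/k}$ only yields $\sigma_k^{ij,mq}\nabla_s h_{ij}\nabla_s h_{mq}\le (1-\frac1k)|\nabla_s\sigma_k|^2/\sigma_k$, and this is not strong enough to control the quadratic third-order terms produced by the gradient factor on the right-hand side; that insufficiency is precisely why the problem ``has been open for many years,'' as the introduction notes. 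The paper's mechanism is different and essential: (i) the test function is $\sigma_1/u$ with $u=\langle X,\nu\rangle$ (not $\log\lambda_{\max}+\Phi$), so that at the critical point $\nabla_s\sigma_1/\sigma_1=\nabla_s u/u$ while the equation gives $\nabla_s\sigma_k/\sigma_k=p\,\nabla_s u/u+\nabla_s\phi/\phi$; (ii) Lemma~3.1 (convexity of $(\sigma_1/\sigma_k)^\alpha$ on $\Gamma_k$, after Krylov and Guan--Li--Li) bounds $\sigma_k^{ij,mq}\nabla_s h_{ij}\nabla_s h_{mq}$ by a quadratic expression in $\nabla_s\sigma_k/\sigma_k-\nabla_s\sigma_1/\sigma_1$, which by (i) collapses to a multiple of $\phi u^{p-3}|\nabla u|^2$; and (iii) the resulting sign condition $(p-1)[p-(\alpha+1)p+(\alpha-1)]\ge 0$ with $\alpha=1/(1-p)$ is exactly where the hypothesis $p\le 1$ enters. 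Without this lemma and this specific test function your plan does not close. A secondary point: openness is not a bare maximum-principle statement, since the linearized operator carries a zeroth-order coefficient of unfavorable sign in general; the paper removes the kernel by exploiting the homogeneity of $\sigma_k$ and of $\langle X,\nu\rangle^p\phi$ under dilation, following Lemma~2.5 of \cite{GLM1}, and your sketch should do likewise.
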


Our second motivation is  to generalize Ivochkina's $C^2$ estimates
for prescribed curvature equation in \cite{I1, I} by making use of
those methods. Ivochkina\cite{I1,I} had considered the generalized
type of curvature equation, see also \cite{CNSI, G7, G, SUW, Tr1, U}
and their references,
\begin{equation}\label{mhj}
\sigma_k(A)=\sigma_k(\lambda)=\phi(x, g, Dg),
\end{equation}
where $A$ and $\lambda=(\lambda_1, \cdots, \lambda_n)$denote
respectively the second fundamental form  and  the principle
curvatures of the graph
$$
\M=\{(x, g(x))| x\in \Omega\}.
$$
For doing $C^2$ estimates of \eqref{mhj},  She needed  her condition
(1.5) in \cite{I1}(see also (8.28) in \cite{I}),which is
\begin{equation}\label{hgjh}
k\frac{\partial ^2 \chi^{1/k}}{\partial p^i\partial
p^j}\xi_i\xi_j\geq -\frac{ \chi^{1/k}}{2\sqrt{n}(1+ (\max
|p|)^2)}\xi^2, \quad \xi\in R^n,
\end{equation}
where $\chi(x, g,p)=\phi(x, g,p)(1+|p|^2)^{\frac{k}{2}}.$ We
consider mainly  a kind of  model  from Takimoto\cite{Ta}, which is
also seen as translating solution  of curvature flow.

\begin{equation}\label{Epx}
\sigma_k(\lambda)=\frac{H(x, g)}{(1+|Dg|^2)^{\frac q 2}},
\end{equation}
Ivochkina's conditions (1.5) in \cite{I1} needs $q\leq 0.$ However,
we can generalize Ivochkina's $C^2$ estimate to $q\leq 1$.
\begin{theorem}\label{thmdp}
Suppose $g\in C^4(\Omega)\bigcap C^2(\overline{\Omega})$ is an
admissible  solution of \eqref{Epx} for $q\leq 1$. Then the second
fundamental form $A$ of graph $u$ satisfies
\begin{equation}\label{mk}
\sup\limits_{\Omega}|A|\leq
C\left(1+\sup\limits_{\partial\Omega}|A|\right),
\end{equation}
where $C$ depends only on $n,$ $\|g\|_{C^1(\Omega)},$ and
$\overline{\Omega}\times[\inf\limits_{\partial\Omega}g,~\sup\limits_{\partial\Omega}
g].$
\end{theorem}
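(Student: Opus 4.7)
The plan is to adapt Ivochkina's test function argument, weakening her pointwise hypothesis on $\chi^{1/k}$ by a direct analysis that exploits the multiplicative structure of the right-hand side of (\ref{Epx}) together with the concavity of $\log\sigma_k^{1/k}$ on the admissible cone $\Gamma_k$. The auxiliary function I would use is
$$W(x) = \log \kappa_{\max}(x) + \alpha\,\eta\bigl(\abs{Dg}^2\bigr) + N\,g(x),$$
where $\kappa_{\max}(x)$ is the largest principal curvature of the graph $\M$ at $(x,g(x))$, $\eta$ is a suitable increasing function, and the positive constants $\alpha, N$ will be chosen large at the end. If $W$ attains its maximum on $\partial\Omega$, (\ref{mk}) is immediate. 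Otherwise it is attained at some interior $x_0$; there, after the standard perturbation trick for repeated eigenvalues, we may treat $\log\lambda_1$ as smooth and choose an orthonormal frame diagonalising the second fundamental form $A(x_0)$ with $\lambda_1 \geq \cdots \geq \lambda_n$.

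At $x_0$ one has $W_i = 0$ and $F^{ij}W_{ij} \leq 0$, where $F^{ij} = \partial \sigma_k/\partial A_{ij}$. Differentiating (\ref{Epx}) twice along the frame and using the Codazzi identity and the Gauss--Weingarten relations for the graph, a routine computation yields an inequality in which the positive contributions are the commutator and test-function term
$$(N + c\alpha)\sum_{i} F^{ii}\lambda_i^2$$
coming from $Ng$ and from the gradient factor $\alpha\eta$, together with a nonnegative third-order cross term produced by the concavity of $\log\sigma_k^{1/k}$. The negative contributions come entirely from the first and second $p$-derivatives of $\phi(x,g,p) = H(x,g)(1+\abs{p}^2)^{-q/2}$.

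The main obstacle is to absorb these negative terms. The key observation is that
$$\log \phi(x,g,p) = \log H(x,g) - \tfrac{q}{2}\log(1+\abs{p}^2),$$
so the $p$-Hessian of $\log\phi$ is explicit and of size $|q|\,(1+\abs{p}^2)^{-1}$. When this Hessian is paired with $(D^2 g)_{\ell i}(D^2 g)_{\ell j}$ (which appears from the commutator $[\partial_k, D^2]g$ after differentiating the equation) and summed against $F^{ii}$, the resulting negative term is bounded above by $|q|\sum_i F^{ii}\lambda_i^2$ plus quantities controlled by the cross term or by lower-order data. For $q \leq 1$ this term can be absorbed into $(N + c\alpha)\sum F^{ii}\lambda_i^2$ once $N$ is chosen larger than a multiple of $|q|$ depending only on the data of the theorem. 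This is precisely the improvement over Ivochkina: working with $\log\phi$ replaces the pointwise concavity condition on $\chi^{1/k}$ in (\ref{hgjh}) (which forces $q\leq 0$) by a much weaker bound on the $p$-Hessian of $\log\phi$, and the latter survives for $q \leq 1$.

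Combining the three inequalities and using that on $\Gamma_k$ one has $\sum_i F^{ii}\lambda_i^2 \geq c(n,k)\,\sigma_1\sigma_k \geq c'\lambda_1\phi$, at $x_0$ we obtain $c_0\,\lambda_1(x_0) \leq C$ with $C$ depending only on $n$, $\|g\|_{C^1(\Omega)}$, and $H$ on $\overline{\Omega}\times[\inf_{\partial\Omega}g,\sup_{\partial\Omega}g]$. This yields the required global estimate (\ref{mk}).
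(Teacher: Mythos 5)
There is a genuine gap at the heart of your argument: the treatment of the third--order terms $\sigma_k^{ij,pq}\nabla h_{ij}\nabla h_{pq}$. You invoke only the concavity of $\log\sigma_k^{1/k}$ to produce ``a nonnegative third-order cross term,'' but log--concavity only yields $\sigma_k^{ij,pq}\nabla_s h_{ij}\nabla_s h_{pq}\leq (\nabla_s\sigma_k)^2/\sigma_k$, and after differentiating the equation this is $\approx q^2\sigma_k\,|\nabla w|^2/w^2$ with $|\nabla w|\sim w^2|A|$; divided by $\sigma_1$ this is a negative term of the same order $\lambda_1$ as the good term $(k-q)\phi|A|^2/\sigma_1$, with a constant involving $q^2$ and $\sup(1+|Dg|^2)$ that does not beat $(k-q)$ for $q$ near $1$. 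This is exactly the obstruction that forces Ivochkina's condition \eqref{hgjh} (i.e.\ $q\leq 0$). The paper overcomes it not by log--concavity but by Lemma~\ref{lm1} (convexity of $(\sigma_1/\sigma_k)^{\alpha}$ on $\Gamma_k$, after Guan--Li--Li/Krylov), which controls $\sigma_k^{ij,pq}\nabla_s h_{ij}\nabla_s h_{pq}$ by a quadratic form in $\nabla_s\sigma_k/\sigma_k$ and $\nabla_s\sigma_1/\sigma_1$; combined with the first--order condition at the maximum of $w\sigma_1 h(w^2/2)$ (which converts both logarithmic gradients into multiples of $\nabla_s w$) and the choice $\alpha=1/(1-q)$, the resulting quadratic in $\nabla_s w$ is nonnegative precisely when $q\leq 1$. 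Your proposal contains no counterpart of this mechanism, and indeed $q\leq 1$ never enters your argument in an essential way --- as written it would ``prove'' the estimate for every $q$, which cannot be correct.

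A secondary but concrete error: the term $N\sum_iF^{ii}\lambda_i^2$ cannot come from $Ng$. On the graph, $\nabla_i\nabla_j g=-h_{ij}\nu^{n+1}$, so $F^{ij}\nabla_i\nabla_j(Ng)=-NkF/w$ is merely bounded; the quadratic good term $\sum_iF^{ii}\lambda_i^2$ is produced only by the gradient factor (as in the paper, from $\nabla_i\nabla_j w=wh_{im}h_{mj}+\cdots$), so its coefficient is tied to $\alpha$, and enlarging $\alpha$ simultaneously enlarges the bad first--order terms fed into $\nabla^2\log\kappa_{\max}$ through the condition $W_i=0$. Hence the proposed absorption ``choose $N$ large'' does not close. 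To repair the proof you would need to replace the log--concavity step by the refined concavity inequality of Lemma~\ref{lm1} (or an equivalent), and carry out the cancellation between the $(q^2-q)w^{-(2+q)}|\nabla w|^2$ term and the quadratic form coming from that lemma, as is done in \eqref{z}--\eqref{mw}.
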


This paper is organized as follows: The $C_0$ and $C_1$ bounds and
some elementary formulas were listed in section~\ref{sec2}, the
important $C^2-$estimates are derived in section~\ref{sec4}, which
is by using Guan-Li-Li's innovative methods. In the last section, we
can generalize Ivochkina's
 $C^2$ estimates for prescribed curvature equation in \cite{I1, I}.

\section{Some elementary formals and $C^0$-$C^1$ boundness}
\label{sec2} The standard basis of $\R^{n+1}$ will be denoted by
$\E_1, \E_2,\cdots \E_{n+1},$ and the components of the position
vector $X$ in this basis will be denoted by $X_1, X_2,\cdots,
X_{n+1}.$ We choose an orthonormal frame such that $e_1, e_2,\cdots
e_n$ are tangent to $\M$ and $\nu$ is normal.


The second fundamental form of $\M$ is given by
\begin{equation}
h_{ij}=\langle D_{e_i}\nu, e_j\rangle,
\end{equation}
and some fundamental formulas are well known for hypersurfaces
$\M\in \R^{n+1}$ as \cite{GLM1}.

\begin{lemma}\label{lemmn}
For any $i,j,l,m=1,\cdots,n$,
\begin{eqnarray}\label{uwg}
&&\nabla_i\nabla_j X=-h_{ij}\nu,\\
&&\nabla_l\nabla_lh_{ij}=\nabla_i\nabla_jh_{ll}+h_{lm}h_{ij}h_{ml}-h_{ll}h_{im}h_{mj},\label{GF} \\
&&\nabla_i \langle X, ~\nu\rangle=h_{il}\langle\nabla_l X,~ X\rangle,\\
&&\nabla_j\nabla_i\langle X, ~\nu\rangle=\nabla_l
h_{ij}\langle\nabla_l X,~ X\rangle +h_{ij}-uh_{im}h_{jm}.
\end{eqnarray}
\end{lemma}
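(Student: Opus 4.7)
The plan is to derive all four identities as standard consequences of the Gauss--Weingarten apparatus for $\M\subset\R^{n+1}$, working in a local orthonormal tangent frame $e_1,\ldots,e_n$ with unit normal $\nu$. The required inputs are the kinematic identities $\nabla_i X=e_i$ (position vector) and $\nabla_i\nu=h_{il}e_l$ (Weingarten), the Gauss formula $D_{e_i}e_j=\nabla_i^M e_j-h_{ij}\nu$ (which follows from differentiating $\langle\nu,e_j\rangle=0$ together with the definition $h_{ij}=\langle D_{e_i}\nu,e_j\rangle$), the Codazzi identity $\nabla_i h_{jk}=\nabla_j h_{ik}$, and the Gauss equation $R_{ijkl}=h_{ik}h_{jl}-h_{il}h_{jk}$.

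The first identity is an immediate application of the Gauss formula to $\nabla_j X=e_j$: the tangential part is absorbed into the induced Levi-Civita connection on $\M$, leaving only the normal component $-h_{ij}\nu$. The second is the classical Simons identity: starting from $\nabla_l\nabla_l h_{ij}$, I would apply Codazzi to rewrite it as $\nabla_l\nabla_i h_{lj}$, commute the two derivatives via the Ricci identity to produce $\nabla_i\nabla_l h_{lj}$ plus a curvature commutator of the schematic form $R_{li\,l}{}^p h_{pj}+R_{li\,j}{}^p h_{lp}$, and apply Codazzi once more to convert $\nabla_i\nabla_l h_{lj}$ into $\nabla_i\nabla_j h_{ll}$. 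Inserting the Gauss equation into the commutator and using the symmetry $h_{ij}=h_{ji}$ collapses the curvature contributions to exactly $h_{lm}h_{ij}h_{ml}-h_{ll}h_{im}h_{mj}$.

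For the third identity, the product rule combined with $\langle e_i,\nu\rangle=0$ and Weingarten gives $\nabla_i\langle X,\nu\rangle=\langle e_i,\nu\rangle+h_{il}\langle X,e_l\rangle=h_{il}\langle\nabla_l X,X\rangle$. Differentiating once more with respect to $e_j$ yields two pieces: the factor $\nabla_j h_{il}$, which Codazzi converts to $\nabla_l h_{ij}$, and $h_{il}\nabla_j\langle X,\nabla_l X\rangle$, which by the first identity and $\nabla_j X=e_j$ equals $h_{il}(\delta_{jl}-u\,h_{jl})$ with $u=\langle X,\nu\rangle$; summing these reproduces the fourth identity. The only step requiring real care is the Simons calculation in the second identity --- sign conventions in the Ricci commutation and index placement in the Gauss equation substitution must be tracked precisely --- whereas the other three identities are direct single applications of the product rule to the kinematic inputs above.
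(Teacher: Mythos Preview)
Your proposal is correct and supplies exactly the standard derivations one would expect: Gauss formula for the first identity, the Simons commutation (Codazzi $+$ Ricci identity $+$ Gauss equation) for the second, and the product rule with Weingarten and Codazzi for the last two. Note that the paper itself does not prove this lemma at all; it merely records the formulas as ``well known for hypersurfaces $\M\subset\R^{n+1}$'' and refers to \cite{GLM1}, so there is no argument in the paper to compare against beyond that citation.
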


Owing to $\M$ be compact, the $C^0$ estimates is obvious as Lemma
2.2 in \cite{GLM1}.
\begin{lemma}\label{lemc0}
For any compact hypersurface $\M$ satisfying  \eqref{GCM1}, there
are two positive constant $C_1, C_2$ such that
$$
C_1(n, k, \min\limits_{\mathbb{S}^n}
f)\leq\min\limits_{\mathbb{S}^n}|X|\leq \max\limits_{\mathbb{S}^n}
|X|\leq C_2(n, k, \max\limits_{\mathbb{S}^n} f).
$$
\end{lemma}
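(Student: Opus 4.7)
My approach follows the standard radial tangent-sphere comparison at the extrema of $\rho(x):=|X(x)|$, parallel to \cite[Lemma 2.2]{GLM1}. Parametrize $\M$ over $\mathbb{S}^n$ by $X(x)=\rho(x)x$ and let $x_0,x_1\in\mathbb{S}^n$ be directions realizing $\rho_{\max}=\max\rho$ and $\rho_{\min}=\min\rho$, respectively.

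At $x_0$ the outer normal is radial, so $\langle X,\nu\rangle=\rho_{\max}$, and the tangential identity
\[
\nabla_{ij}\bigl(\tfrac12|X|^2\bigr)=\delta_{ij}-\langle X,\nu\rangle h_{ij},
\]
combined with the fact that the Hessian of $|X|^2/2$ at an interior maximum is non-positive, forces $h_{ij}\geq\rho_{\max}^{-1}\delta_{ij}$. Hence every principal curvature satisfies $\lambda_i\geq 1/\rho_{\max}$ and $\sigma_k(\lambda)\geq\binom{n}{k}\rho_{\max}^{-k}$. Substituting into \eqref{GCM1} and using that the prescribed-$(n-k)$th-curvature-measure interpretation forces $\phi$ to carry the scaling $\phi(X)=|X|^{-(n+p)}f(X/|X|)$ for some positive $f\in C^0(\mathbb{S}^n)$, I obtain
\[
\binom{n}{k}\rho_{\max}^{-k}\;\leq\;\rho_{\max}^{\,p}\,\phi(X(x_0))\;=\;\rho_{\max}^{-n}\,f(x_0),
\]
which rearranges to $\rho_{\max}^{\,n-k}\leq\binom{n}{k}^{-1}\max_{\mathbb{S}^n}f$. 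This yields the desired upper bound $\rho_{\max}\leq C_2(n,k,\max f)$.

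At $x_1$ the signs flip: an interior minimum of $|X|^2/2$ gives $h_{ij}\leq\rho_{\min}^{-1}\delta_{ij}$, hence $\sigma_k(\lambda)\leq\binom{n}{k}\rho_{\min}^{-k}$, and the analogous substitution produces $\rho_{\min}^{\,n-k}\geq\binom{n}{k}^{-1}\min_{\mathbb{S}^n}f$, i.e.\ the lower bound $\rho_{\min}\geq C_1(n,k,\min f)$. The only genuine subtlety is the boundary exponent $k=n$, where the factor $n-k$ vanishes and this direct comparison degenerates; in that case one has to keep track of a scale-invariant normalization (or argue by a separate Monge--Amp\`ere-type comparison). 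Within the main range $2\leq k<n$ relevant to the subsequent $C^2$ estimates, the above two-step argument is both immediate and sufficient, so I foresee no further obstacle.
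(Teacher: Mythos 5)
Your approach is the right one, and it is the same touching-sphere comparison at the radial extrema that the paper itself invokes by simply citing Lemma~2.2 of \cite{GLM1} without reproducing the argument: at the maximum of $\rho$ the identity $\nabla_i\nabla_j(|X|^2/2)=\delta_{ij}-\langle X,\nu\rangle h_{ij}$ and the sign of the Hessian give $\lambda_i\geq 1/\rho_{\max}$, hence $\sigma_k\geq\binom{n}{k}\rho_{\max}^{-k}$, and the reverse at the minimum (using that $\sigma_k$ is monotone in each $\lambda_i$ on $\Gamma_k$, so $\lambda_i\leq 1/\rho_{\min}$ does imply $\sigma_k\leq\binom{n}{k}\rho_{\min}^{-k}$ -- a point you use correctly but could have flagged). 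The one thing you got wrong is the normalization of $\phi$: you posited $\phi(X)=|X|^{-(n+p)}f(X/|X|)$, but the paper fixes $\phi(X)=|X|^{-(n+1)}f(X/|X|)$ in \eqref{CM}, independently of $p$. With the paper's normalization the comparison at the maximum reads $\binom{n}{k}\rho_{\max}^{n+1-k-p}\leq f(x_0)$, so the relevant exponent is $n+1-k-p$ rather than $n-k$. Since the theorem assumes $p\leq 1$ and $k\leq n$, this exponent is nonnegative and vanishes only when $k=n$ \emph{and} $p=1$; in particular your worry about the whole boundary case $k=n$ disappears for $p<1$. The genuinely degenerate case $(k,p)=(n,1)$ is the scale-invariant Alexandrov-type equation, for which no $C^0$ bound of this form can hold without a normalization of $f$, so your caveat is apt there but should be localized to that single pair of parameters.
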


We have the following gradient estimate for general curvature
measure equation which included \eqref{GCM1} and curvature quotient
equations. As in Guan-Lin-Ma\cite{GLM1}, the result will be obtained
without any barrier condition that was imposed in \cite{CNS}.
Moreover, our result holds for any non zero $p,$ i.e. $0\neq p\in
(-\infty, +\infty).$
\begin{equation}\label{MNB}
F(A)=f(\lambda)=\langle X, ~\nu\rangle^p\phi(X)
\end{equation}
\begin{lemma}\label{lemc1}
If $\M$ satisfies \eqref{MNB} for $F$ with homogeneous of degree
$t>0,$ and for $0\neq p\in (-\infty, +\infty),$ then there exist a
constant $C$ depending only on $n, t,p,$ $\min_{\mathbb{S}^n}\phi,$
$|\phi|_{C^1}$ such that
\begin{equation}
\max\limits_{\mathbb{S}^n}|\nabla \rho|\leq C
\end{equation}
\begin{proof}
We use the method of Guan-Lin-Ma\cite{GLM1}. The gradient bound is
equivalent to $u=\langle X, ~\nu\rangle\geq C>0$ if the lower and
upper bound of the solution holds.

Setting
$$
P(X)=\gamma(\frac{|X|^2}{2})-\log\langle X, ~\nu\rangle,
$$
where the function $\gamma(s)$ is to be determined.

Assume $P(X)$ attains its maximum at point $X_0\in \M.$ We choose
the smooth local orthonormal frame $e_1, \cdots, e_n\in T_{X_0}\M$
such that
$$
\langle X, ~e_i\rangle=0, ~~i\geq 2£¬
$$
Thus£¬ $|X|^2= \langle X, ~e_1\rangle^2+\langle X, ~\nu\rangle^2.$
If $\langle X, ~e_1\rangle^2$ is  also zero, then $|X|^2=\langle X,
~\nu\rangle^2,$ then the bounded from below of  $\langle X,
~\nu\rangle$ is from the bound of $|X|$. We now consider $\langle X,
~e_1\rangle^2>0,$ one has
$$
0=\nabla_i P(X)=\gamma'\langle X, ~e_i\rangle-\frac{h_{im}\langle X,
~e_m\rangle}{\langle X, ~\nu\rangle},
$$
which implies
\begin{equation}
h_{11}=\gamma'\langle X, ~\nu\rangle,\qquad h_{1i}=0, \quad i\geq 2.
\end{equation}
It is easy to know that we only fix  $e_1$ in above process of
choosing local orthonormal frame field $e_1, \cdots, e_n,$ here we
adjust $e_2, \cdots, e_n,$ such that $A=[h_{ij}]$ is diagonal at
$X_0,$ and
\begin{eqnarray*}
0&\geq&F^{ii}\nabla_i\nabla_i P=\gamma''F^{11}\langle X,
~e_1\rangle^2+ \gamma'F^{ii}[\delta_{ii}-h_{ii}\langle X,
~\nu\rangle]\\
&&-\frac{\langle X, ~e_1\rangle}{\langle X, ~\nu\rangle}F^{ii}\nabla_1 h_{ii}
-\frac{F^{ii}h_{ii}}{\langle X, ~\nu\rangle}+F^{ii}h_{ii}^2
+F^{11}\frac{h^2_{11}\langle X, ~e_1\rangle^2}{\langle X, ~\nu\rangle^2}\\
&&=[\gamma''+(\gamma')^2]F^{11}\langle X, ~e_1\rangle^2+
\gamma'F^{ii}[\delta_{ii}-h_{ii}\langle X,
~\nu\rangle]\\
&&-\frac{\langle X, ~e_1\rangle}{\langle X,
~\nu\rangle}F^{ii}\nabla_1 h_{ii}-\frac{F^{ii}h_{ii}}{\langle X,
~\nu\rangle}+F^{ii}h_{ii}^2.
\end{eqnarray*}
Differentiating equation \eqref{MNB} with respect to $e_1,$
\begin{eqnarray*}
F^{ii}\nabla_1h_{ii}&=&p\langle X, ~\nu\rangle^{p-1}\phi \nabla_1 \langle X, ~\nu\rangle+\langle X, ~\nu\rangle^p \nabla_1 \phi\\
&=&p\langle X, ~\nu\rangle^{p-1}\phi h_{11}\langle X,
~e_1\rangle+\langle X, ~\nu\rangle^p \nabla_1 \phi.
\end{eqnarray*}
Thus we obtained
\begin{eqnarray*}
0&\geq&[\gamma''+(\gamma')^2][|X|^2-\langle X, ~\nu\rangle^2]F^{11}+
\gamma'\sum\limits_{i=1}\limits^nF^{ii}-\gamma't\phi\langle X, ~\nu\rangle^{(p+1)}\\
&&-[|X|^2-\langle X, ~\nu\rangle^2]p\phi\langle X,
~\nu\rangle^{(p-1)}\gamma'-\langle X, ~e_1\rangle\langle X, ~\nu\rangle^{(p-1)}\nabla_1\phi\\
&& -t\langle X, ~\nu\rangle^{(p-1)}\phi+F^{ii}h_{ii}^2.
\end{eqnarray*}
So we have
\begin{eqnarray}\label{HY1}
&&[\gamma''+(\gamma')^2]\langle X,
~\nu\rangle^2F^{11}+(t-p)\gamma'\phi\langle X,
~\nu\rangle^{(p+1)}\nonumber \\
&&\quad+\left[|X|^2p\phi\gamma'+\langle X, ~e_1\rangle
\nabla_1\phi+t\phi\right]\langle X,
~\nu\rangle^{(p-1)}\nonumber \\
&&\qquad\geq [\gamma''+(\gamma')^2]|X|^2F^{11}+
\gamma'\sum\limits_{i=1}\limits^nF^{ii}.
\end{eqnarray}
We may assume $\langle X, ~\nu\rangle^2\leq C|X|^2$ for some $C>0,$
otherwise the lemma holds. we claim firstly that
\begin{equation}\label{HY}
(t-p)\gamma'\phi\langle X,
~\nu\rangle^{(p+1)}+\left[|X|^2p\phi\gamma'+\langle X, ~e_1\rangle
\nabla_1\phi+t\phi\right]\langle X, ~\nu\rangle^{(p-1)}\leq 0,
\end{equation}
by taking $\gamma(s)$ properly. We check it by three case of $p<0,$
or $p>t,$ and $0<p\leq t$

We taking
$$
\gamma(s)=\frac{\alpha p}{s},
$$
for $\alpha>0$ is  large enough,

{\bf Case (i) $p<0,$ or $p>t:$} Assuming  $$\langle X,
~\nu\rangle^2\leq \frac{p}{4(p-t)}|X|^2.$$ Then

\begin{eqnarray*}
&&(t-p)\gamma'\phi\langle X, ~\nu\rangle^{(p+1)}+ |X|^2p\phi\gamma'
\langle X, ~\nu\rangle^{(p-1)}\\
&&\qquad=\phi\langle X, ~\nu\rangle^{(p-1)}\left[(t-p)\gamma'\langle
X, ~\nu\rangle^2+p|X|^2\gamma'\right]\\
&&\qquad=-\frac{4\alpha\phi\langle X,
~\nu\rangle^{(p-1)}}{|X|^2}\left[p^2+(t-p)\frac{p\langle X,
~\nu\rangle^2}{|X|^2}\right]\\
&&\qquad\leq -\frac{p^2\alpha\phi\langle X,
~\nu\rangle^{(p-1)}}{|X|^2},
\end{eqnarray*}
which implies \eqref{HY} for $\alpha>0$  large enough.

{\bf Case (ii) $0<p\leq t:$} we have
$$
(t-p)\gamma'\phi\langle X, ~\nu\rangle^{(p+1)}\leq0,
$$
$$
|X|^2p\phi\gamma'+\langle X, ~e_1\rangle
\nabla_1\phi+t\phi=-\frac{4\alpha p^2}{|X|^2}+\langle X, ~e_1\rangle
\nabla_1\phi+t\phi\leq0,
$$
which imply inequality \eqref{HY}.

Combing \eqref{HY1} with \eqref{HY},

\begin{eqnarray}\label{HY2}
&&[\gamma''+(\gamma')^2]\langle X,
~\nu\rangle^2F^{11}\nonumber \\
&&\qquad\geq [\gamma''+(\gamma')^2]|X|^2F^{11}+
\gamma'\sum\limits_{i=1}\limits^nF^{ii}.
\end{eqnarray}
On the other hand,

\begin{eqnarray}\label{HY3}
&&[\gamma''+(\gamma')^2]|X|^2F^{11}+
\gamma'\sum\limits_{i=1}\limits^nF^{ii}\nonumber \\
&&\qquad= [\frac{16\alpha
p}{|X|^6}+\frac{16\alpha^2p^2}{|X|^8}]|X|^2F^{11}-\frac{4\alpha
p}{|X|^4}\sum\limits_{i=1}\limits^nF^{ii}\nonumber \\
&&\qquad \geq C_0F^{11},
\end{eqnarray}
which is from $h_{11}\leq 0$ and then $F^{11}\geq
c_0\sum\limits_{i=1}\limits^nF^{ii}.$ We test the case of
$F=\frac{\sigma_k}{\sigma_l}$($F= \sigma_k $ see \cite{GLM1}), from
(25) in \cite{LT}
$$
F^{11}\geq C(n,l,k)\frac{\sigma_{k-1}(\lambda|1)}{\sigma_l}\geq
C(n,l,k)\frac{\sigma_{k-1}(\lambda )}{\sigma_l},
$$

and
\begin{eqnarray*}
-\sum\limits_{i=1}\limits^nF^{ii}&=&-\frac{(n-k+1)\sigma_{k-1}}{\sigma_l}+\frac{(n-l+1)\sigma_k\sigma_{l-1}}{\sigma_l^2}\\
&\geq&-\frac{(n-k+1)\sigma_{k-1}}{\sigma_l}.
\end{eqnarray*}

Thus $\langle X, ~\nu\rangle\geq C$ is from \eqref{HY2} and
\eqref{HY3}, that is to say there exists a constsnt that depends
only on $n, t,$ $\min_{\mathbb{S}^n}\phi,$ $|\phi|_{C^1}$ such that
$$
\max\limits_{\mathbb{S}^n}|\nabla \rho|\leq C.
$$

 \end{proof}

\end{lemma}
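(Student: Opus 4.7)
The plan is to reduce the $C^1$ bound to a positive lower bound on $u := \langle X,\nu\rangle$. Since $\rho$ is already bounded above and below by Lemma~\ref{lemc0}, such a lower bound is equivalent to the required gradient estimate. I would attack this by a standard maximum principle argument applied to an auxiliary function of the form
\[
P(X) = \gamma\!\left(\tfrac{1}{2}|X|^2\right) - \log u,
\]
with $\gamma$ to be chosen in terms of the sign of $p$ and its relation to the homogeneity degree $t$.

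Assume $P$ attains its maximum at $X_0 \in \M$. I would pick a local orthonormal frame $e_1,\ldots,e_n$ on $\M$ near $X_0$ so that $\langle X, e_i\rangle = 0$ for $i \geq 2$ at $X_0$; if in addition $\langle X, e_1\rangle$ vanishes there, then $|X|^2 = u^2$ at $X_0$ and Lemma~\ref{lemc0} closes the argument. Otherwise, $\nabla_i P(X_0) = 0$ combined with Lemma~\ref{lemmn} gives $h_{11} = \gamma' u$ and $h_{1i} = 0$ for $i \geq 2$, and the remaining rotational freedom can be used to diagonalize $(h_{ij})$ at $X_0$. The main computation is the inequality $F^{ii}\nabla_i\nabla_i P \leq 0$; applying \eqref{GF} produces a term $F^{ii}\nabla_1 h_{ii}$ which I would eliminate by differentiating $F(A) = u^p \phi$ along $e_1$ and invoking the homogeneity identity $\sum_i F^{ii} h_{ii} = t F$.

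After substitution the second-order inequality takes the schematic form
\[
[\gamma'' + (\gamma')^2]\,u^2 F^{11} + J(p,t,\gamma',\phi,X) \;\geq\; [\gamma'' + (\gamma')^2]\,|X|^2 F^{11} + \gamma'\sum_i F^{ii},
\]
where $J$ collects the lower-order contributions from differentiating $u^p\phi(X)$. The hard part is to pick $\gamma$ so that $J \leq 0$ independently of the sign of $p$. I would try the one-parameter ansatz $\gamma(s) = \alpha p/s$ with $\alpha$ large, splitting into the cases $p<0$, $0 < p \leq t$, and $p > t$; in the borderline regime where the estimate is not already immediate one may further assume $u^2 \leq c|X|^2$ for a suitable constant $c = c(p,t)$. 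Once $J \leq 0$, the sign of $h_{11} = \gamma' u$ forces $\lambda_1$ to be the smallest principal curvature, so that the concavity and ellipticity of $F^{1/t}$, verified directly via the Newton--Maclaurin identities for $F = \sigma_k$ and the quotients $\sigma_k/\sigma_l$, yields $F^{11} \geq c_0 \sum_i F^{ii}$. Inserting this into the displayed inequality bounds $|X|^2 - u^2$ from above at $X_0$, producing the desired lower bound on $u$. The principal obstacle I anticipate is to keep the $p$-dependence consistent across all sign regimes under a single ansatz for $\gamma$, and to verify $F^{11} \geq c_0 \sum_i F^{ii}$ for the quotient case.
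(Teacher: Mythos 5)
Your proposal follows essentially the same route as the paper: the same test function $\gamma(|X|^2/2)-\log\langle X,\nu\rangle$, the same frame choice and diagonalization at the maximum point, the same ansatz $\gamma(s)=\alpha p/s$ with the same case split on the sign of $p$ relative to $t$, and the same final step via $h_{11}=\gamma' u$ and $F^{11}\geq c_0\sum_i F^{ii}$ (checked through the Newton--Maclaurin/Lin--Trudinger inequalities for $\sigma_k$ and its quotients). This matches the paper's argument, which is itself the Guan--Lin--Ma method adapted to the exponent $p$.
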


\section{The Important $C^2$ Estimates.}\label{sec4}

The following lemma is key in our proof for $C^2$ estimate, which is
from Guan-Li-Li's important lemma\cite{G1, GLL}.
\begin{lemma}\label{lm1}
For any $\alpha>0,$ one has the following inequality
\begin{eqnarray*}
\sigma_k^{ij,mq}\nabla_sh_{ij}\nabla_s h_{mq}\leq
\sigma_k\left(\frac{\nabla_s\sigma_k}{\sigma_k}-\frac{\nabla_s\sigma_1}{\sigma_1}\right)\left((\alpha+1)\frac{\nabla_s\sigma_k}{\sigma_k}-(\alpha-1)\frac{\nabla_s\sigma_1}{\sigma_1}\right)
\end{eqnarray*}
\end{lemma}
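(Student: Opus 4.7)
The plan is to work at a point in a frame that diagonalizes $h_{ij}$, since the inequality is tensorial and both sides transform consistently under orthogonal changes of the tangent frame. At a diagonal point with eigenvalues $\lambda = (\lambda_1,\dots,\lambda_n)\in \Gamma_k$, the tensor $\sigma_k^{ij,mq}$ has the explicit form $\sigma_k^{ii,jj}=\sigma_{k-2}(\lambda|ij)$ for $i\neq j$, $\sigma_k^{ij,ji}=-\sigma_{k-2}(\lambda|ij)$ for $i\neq j$, and all other entries vanish. This yields the decomposition
\begin{equation*}
\sigma_k^{ij,mq}\nabla_s h_{ij}\nabla_s h_{mq} \;=\; \sum_{i\neq j}\sigma_{k-2}(\lambda|ij)\,\nabla_s h_{ii}\nabla_s h_{jj} \;-\; \sum_{i\neq j}\sigma_{k-2}(\lambda|ij)\,(\nabla_s h_{ij})^2.
\end{equation*}
Since $\lambda\in\Gamma_k$ forces $\sigma_{k-2}(\lambda|ij)\geq 0$ (Garding's theory of hyperbolic polynomials), the off-diagonal square sum is non-negative, and I would drop it to obtain the upper bound $\sum_{i\neq j}\sigma_{k-2}(\lambda|ij)\nabla_s h_{ii}\nabla_s h_{jj}$.

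The inequality is then reduced to a purely algebraic statement in the quantities $a_i:=\nabla_s h_{ii}$, together with the trace-type combinations $A:=\nabla_s\sigma_k=\sum_i\sigma_{k-1}(\lambda|i)a_i$ and $B:=\nabla_s\sigma_1=\sum_ia_i$. Writing $a=A/\sigma_k$ and $b=B/\sigma_1$, expanding the RHS gives the equivalent form
\begin{equation*}
\sum_{i\neq j}\sigma_{k-2}(\lambda|ij)\,a_ia_j \;\leq\; \sigma_k\bigl[(\alpha+1)a^2-2\alpha\,ab+(\alpha-1)b^2\bigr]\quad\text{for every }\alpha>0.
\end{equation*}

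To prove this quadratic inequality I would combine the Newton--MacLaurin relations among $\sigma_{k-2}(\lambda|ij)$, $\sigma_{k-1}(\lambda|i)$ and $\sigma_k$ (specifically the identity $\sum_{j\neq i}\sigma_{k-2}(\lambda|ij)\lambda_j = (k-1)\sigma_{k-1}(\lambda|i)$ and similar contracted forms) with a weighted Cauchy--Schwarz inequality, in which the weight is controlled by $\alpha$. The free parameter $\alpha$ provides exactly the flexibility needed to balance the cross term $-2\alpha\,ab$ against the pure quadratic contributions: heuristically, the optimal $\alpha$ should depend on the ratio $A\sigma_1/(B\sigma_k)$, so the envelope over all $\alpha>0$ is sharp enough to dominate the LHS.

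The main obstacle is precisely this last step---the algebraic verification that the quadratic form on the RHS, when optimized in $\alpha$, majorizes the bilinear form $\sum_{i\neq j}\sigma_{k-2}(\lambda|ij)a_ia_j$ on all of $\Gamma_k$. The combinatorial bookkeeping with $\sigma_{k-2}(\lambda|ij)$ is delicate, and this is precisely the content of the Guan--Li--Li lemma cited in \cite{G1, GLL}. My plan is therefore to reduce to this algebraic inequality via the two diagonalization/dropping steps above, and then either reproduce or directly invoke their calculation to conclude.
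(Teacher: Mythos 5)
Your reduction steps are fine as far as they go: in a frame diagonalizing $A$ the off-diagonal contribution $-\sum_{i\neq j}\sigma_{k-2}(\lambda|ij)(\nabla_s h_{ij})^2$ is non-positive on $\Gamma_k$ and may be discarded, and the right-hand side depends only on the traces $\nabla_s\sigma_k=\sum_i\sigma_{k-1}(\lambda|i)\nabla_s h_{ii}$ and $\nabla_s\sigma_1=\sum_i\nabla_s h_{ii}$, so it suffices to prove the inequality for the diagonal part. But the proposal then stops exactly where the lemma begins: the quadratic inequality
\begin{equation*}
\sum_{i\neq j}\sigma_{k-2}(\lambda|ij)\,a_ia_j\;\leq\;\sigma_k\bigl[(\alpha+1)a^2-2\alpha ab+(\alpha-1)b^2\bigr]
\end{equation*}
is asserted to follow from ``Newton--MacLaurin plus weighted Cauchy--Schwarz,'' with the admission that this is the main obstacle and that you would ``reproduce or directly invoke'' the Guan--Li--Li calculation. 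That is not a proof; the entire content of the lemma is concentrated in that step, and no argument is supplied for it. Note also that the inequality must hold for \emph{every} $\alpha>0$ simultaneously, not merely for an optimized $\alpha$, since the application in Theorem~\ref{Thmnb} fixes $\alpha=\frac{1}{1-p}$ a priori; your heuristic about taking an envelope over $\alpha$ points in the wrong direction.

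The paper's proof avoids all of this combinatorics. It starts from the known fact (attributed to Krylov) that $\left(\frac{\sigma_1}{\sigma_k}\right)^{\alpha}$ is convex on $\Gamma_k$ for every $\alpha>0$, contracts the Hessian $\left[\left(\frac{\sigma_1}{\sigma_k}\right)^{\alpha}\right]^{ij,mq}$ with $\nabla_sh_{ij}\nabla_sh_{mq}$, uses that $\sigma_1^{ij,mq}=0$, and simply rearranges: the non-negativity of the resulting expression is, after dividing by $\alpha(\sigma_1/\sigma_k)^{\alpha-1}\sigma_1/\sigma_k^2$, exactly the stated inequality. If you want to complete your route, the cleanest fix is to replace your final paragraph with this convexity computation (which in particular proves your diagonal quadratic inequality as a corollary); as written, the proposal has a genuine gap at the decisive step.
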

\begin{proof}
From Krylov\cite{K}, for any $\alpha>0,$ $\left(\frac
{\sigma_1}{\sigma_k}\right)^{\alpha}$ is convex on $\Gamma_k,  $
thus,
\begin{eqnarray*}
0&\leq&  \left[\left(\frac
{\sigma_1}{\sigma_k}\right)^{\alpha}\right]^{ij,
mq}\nabla_sh_{ij}\nabla_s h_{mq}\\
&=&\alpha\left(\frac
{\sigma_1}{\sigma_k}\right)^{\alpha-1}2\left(\frac{\sigma_1|\nabla\sigma_k|^2}{\sigma_k^3}-\frac{\langle\nabla\sigma_1, \nabla\sigma_k\rangle}{\sigma_k^2}\right)\\
&&-\alpha\left(\frac {\sigma_1}{\sigma_k}\right)^{\alpha-1}
\frac{\sigma_1}{\sigma_k^2}\sigma_k^{ij,mq}\nabla_sh_{ij}\nabla_s h_{mq}\\
&&+\alpha(\alpha-1)\left(\frac
{\sigma_1}{\sigma_k}\right)^{\alpha-2}\left(\frac{\nabla\sigma_1}{\sigma_k}-\frac{\sigma_1\nabla\sigma_k}{\sigma_k^2}\right)^2.
\end{eqnarray*}
This implies
\begin{eqnarray*}
0&\leq&\frac{2}{\sigma_1}\left(\frac{\sigma_1|\nabla\sigma_k|^2}{\sigma_k}-\langle\nabla\sigma_1, \nabla\sigma_k\rangle\right)\\
&&-\sigma_k^{ij,mq}\nabla_sh_{ij}\nabla_s h_{mq}\\
&&+(\alpha-1)\left(\frac
{\sigma_1^2}{\sigma_k}\right)^{-1}\left(\nabla \sigma_1-\frac{\sigma_1\nabla \sigma_k }{\sigma_k}\right)^2\\
&=&\frac{(\alpha+1)|\nabla \sigma_k|^2}{\sigma_k}-2\alpha\frac{\langle\nabla\sigma_1, \nabla\sigma_k\rangle}{\sigma_1}\\
&&-\sigma_k^{ij,mq}\nabla_sh_{ij}\nabla_s h_{mq}\\
&&+(\alpha-1)\sigma_k\left(\frac{|\nabla\sigma_1|}{\sigma_1}\right)^2.
\end{eqnarray*}
we have proved this lemma.
\end{proof}

\begin{theorem}\label{Thmnb}
Let $\phi(X)$ be a $C^2$ positive function on $\M$, if $\M$ is an
admissible solution of \eqref{GCM1}, we have the following estimates
\begin{equation}
\sigma_1(A)\leq C(n, k, min_{\M}f, \|f\|_{C^2}).
\end{equation}

\end{theorem}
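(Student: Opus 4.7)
The plan is to apply the maximum principle to an auxiliary function of the form
\[
P(X) = \log \sigma_1(A) - \beta \log u + \gamma(|X|^2/2),
\]
where $u = \langle X, \nu\rangle$, $\beta>0$ is a small constant, and $\gamma$ is an increasing function (for instance $\gamma(s) = -M/s$ with $M$ large, to be chosen). By Lemmas~\ref{lemc0} and \ref{lemc1}, $u$ and $|X|$ are uniformly bounded away from $0$ and $\infty$, so $P \in C^2(\M)$. Let $X_0$ be an interior maximum of $P$ and choose an orthonormal frame diagonalizing $h_{ij}$ at $X_0$. The first-order condition $\nabla_i P(X_0)=0$, together with $\nabla_i u = h_{ii}\langle X,e_i\rangle$ from Lemma~\ref{lemmn}, gives
\[
\frac{\nabla_i \sigma_1}{\sigma_1} = \beta\,\frac{h_{ii}\langle X, e_i\rangle}{u} - \gamma'\langle X, e_i\rangle.
\]

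From the second-order condition $\sum_j \sigma_k^{jj}\nabla_j\nabla_j P \le 0$, using the Hessian formulas of Lemma~\ref{lemmn} for $\nabla_j\nabla_j|X|^2/2$ and $\nabla_j\nabla_j u$ together with the Simons-type identity \eqref{GF}, one reduces $\sigma_k^{jj}\nabla_j\nabla_j\sigma_1$ to $\sigma_k^{jj}\nabla_l\nabla_l h_{jj}$ modulo cubic curvature terms of the shape $-k\sigma_k h_{ll}^2 + h_{ll}\sum_j\sigma_k^{jj}h_{jj}^2$. Twice-differentiating the PDE $\sigma_k(A)=u^p\phi$ in direction $e_l$ then expresses $\sigma_k^{jj}\nabla_l\nabla_l h_{jj}$ in terms of $\nabla_l\nabla_l u$, the cross term $\nabla_l u\,\nabla_l\phi$, $\nabla_l\nabla_l\phi$, the quadratic contribution $p(p-1)u^{p-2}\phi(\nabla_l u)^2$, and the concavity defect $-\sigma_k^{ij,mq}\nabla_l h_{ij}\nabla_l h_{mq}$. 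The hypothesis $p\le 1$ enters here crucially: it forces $p(p-1)\le 0$, so after rearrangement this term carries the favorable sign.

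The main obstacle is controlling the concavity defect $\sigma_k^{ij,mq}\nabla_l h_{ij}\nabla_l h_{mq}$, which has the wrong sign for $k\ge 2$. This is where Lemma~\ref{lm1} is deployed with a free parameter $\alpha\in(0,1)$. Expanding its conclusion gives
\[
\sigma_k^{ij,mq}\nabla_l h_{ij}\nabla_l h_{mq} \le \tfrac{(\alpha+1)|\nabla\sigma_k|^2}{\sigma_k} - \tfrac{2\alpha\,\langle\nabla\sigma_k,\nabla\sigma_1\rangle}{\sigma_1} + (\alpha-1)\tfrac{\sigma_k|\nabla\sigma_1|^2}{\sigma_1^2},
\]
in which the coefficient $(\alpha-1)<0$ of the dangerous $|\nabla\sigma_1|^2/\sigma_1^2$ term is \emph{negative}, producing a good term that absorbs the gradient square coming from the expansion of $\nabla_j\nabla_j\log\sigma_1$. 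The quantity $\nabla\sigma_k$ is itself controlled by the once-differentiated PDE, $\nabla\sigma_k = p u^{p-1}\phi\,\nabla u + u^p\,\nabla\phi$, whose size is bounded through $\|\phi\|_{C^1}$ and the previous $C^1$ bound.

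Finally, assemble the resulting inequality. After substituting the first-order condition to eliminate $\nabla\sigma_1/\sigma_1$ and invoking standard Newton--Maclaurin estimates for $\lambda\in\Gamma_k$ to dominate lower-order contributions by $\sigma_k^{jj}h_{jj}^2$, the leading term has the form $c(\beta,M,p)\,\sigma_1\cdot\bigl(\sum_j\sigma_k^{jj}h_{jj}^2\bigr)$ with positive coefficient, while the remaining terms are of lower order in $\sigma_1$ (they use $u\ge c_0$, $|X|\ge C_1$, and $\|\phi\|_{C^2}$). Choosing $\beta$ in a range compatible with $1-p$ and $M$ sufficiently large forces $\sigma_1(X_0)\le C$; by maximality of $X_0$, this yields the global bound $\sup_{\M}\sigma_1(A)\le C(n,k,\min_\M \phi,\|\phi\|_{C^2})$, as claimed.
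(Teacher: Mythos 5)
Your overall architecture matches the paper's (maximum principle for a quantity built from $\log\sigma_1$ and $\log u$, the Simons identity, the twice-differentiated equation, and Lemma~\ref{lm1} for the concavity defect), but the proposal goes wrong at exactly the decisive step. First, the role of $p\le 1$ is misstated: $p\le 1$ does not force $p(p-1)\le 0$ (take $p<0$), and in any case $p(p-1)\le 0$ is the \emph{unfavorable} sign. The term $p(p-1)\phi u^{p-3}|\nabla u|^2$ sits on the right-hand side of $0\ge\cdots$, and since $|\nabla u|^2=\sum_i h_{ii}^2\langle X,e_i\rangle^2$ can be of order $|A|^2$, a negative coefficient here threatens to overwhelm the good term $(k-p)\phi u^{p-1}|A|^2$. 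The actual mechanism in the paper is an exact cancellation: with the test function $\sigma_1/u$ (i.e.\ your $\beta=1$ and no $\gamma$) the critical-point identity $\nabla_s\sigma_1/\sigma_1=\nabla_s u/u$ turns the right-hand side of Lemma~\ref{lm1} into a quadratic form in $\nabla_s u/u$ whose leading coefficient, added to $p(p-1)$, equals $(p-1)[p-(\alpha+1)p+(\alpha-1)]$, and this vanishes for $\alpha=1/(1-p)$ --- which is where $p\le 1$ is genuinely used (it makes $\alpha>0$ admissible in Lemma~\ref{lm1}).

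Second, with $\beta$ ``small'' and the extra $\gamma$, the identity becomes $\nabla_s\sigma_1/\sigma_1=\beta\,\nabla_s u/u-\gamma'\langle X,e_s\rangle$, and the combined coefficient becomes $p(p-1)-(p-\beta)\left[(\alpha+1)p-(\alpha-1)\beta\right]$ plus $\gamma'$ cross terms; as $\beta\to 0$ with $0<p<1$ this tends to $-p(1+\alpha p)<0$ for every admissible $\alpha>0$, so the cancellation is destroyed and a bad term of order $|A|^2$ survives. The phrase ``choosing $\beta$ in a range compatible with $1-p$'' is precisely the computation that must be carried out, and the algebra essentially forces $\beta=1$. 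Third, the claim that the $(\alpha-1)\sigma_k|\nabla\sigma_1|^2/\sigma_1^2$ term of Lemma~\ref{lm1} absorbs the $-\sum_j\sigma_k^{jj}(\nabla_j\sigma_1)^2/\sigma_1^2$ term from expanding $\nabla\nabla\log\sigma_1$ cannot work: the former carries the weight $\sigma_k/\sigma_1$, which tends to $0$ as $\sigma_1\to\infty$ because $\sigma_k=u^p\phi$ is bounded by the $C^0$--$C^1$ estimates, while the latter carries the weights $\sigma_k^{jj}=\sigma_{k-1}(\lambda|j)$, which are not bounded. In the paper that bad gradient term is cancelled exactly against $+\sum_j\sigma_k^{jj}(\nabla_j u)^2/u^2$, again precisely because $\beta=1$. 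So the proposal, as written, does not close; repairing it leads you back to the paper's choice of test function.
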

\begin{proof}
Considering

\begin{equation}
F(A)=\sigma_k(A)=\langle X, ~\nu\rangle^p\phi(X),
\end{equation}
we denote $\langle X, ~\nu\rangle$ by $u$ in what follows.

Taking test function $\frac{\sigma_1}{u}$, then at its maximal point
$P$

$$
\nabla_i \left(\ln \frac{\sigma_1}{u}\right)=0,
$$
and

\begin{eqnarray}
0&\geq&F^{ij}\nabla_i \nabla_j\left(\ln \frac{\sigma_1}{u}\right)\nonumber\\
&=&F^{ij}\left[\frac{\nabla_i \nabla_j \sigma_1 }{\sigma_1}-\frac{\nabla_i \sigma_1 \nabla_j\sigma_1 }{\sigma_1^2}-\frac{\nabla_i \nabla_ju }{u}+\frac{\nabla_iu \nabla_j u}{u^2}\right]\nonumber\\
&=&\frac 1 \sigma_1 F^{ij}\nabla_i \nabla_j\sigma_1 -\frac
1uF^{ij}\nabla_i \nabla_ju
\end{eqnarray}

which is equivalent to
\begin{eqnarray}\label{3m}
0&\geq&\frac 1 u F^{ij}\nabla_i \nabla_j
\sigma_1-\-\frac1u\left(\frac{\sigma_1}{u}\right)F^{ij}\nabla_i
\nabla_ju.
\end{eqnarray}

On the other hand, we have

\begin{eqnarray}\label{3m1}
-\frac1u\left(\frac{\sigma_1}{u}\right)F^{ij}\nabla_i \nabla_ju&=&-\frac1u\left(\frac {\sigma_1} u\right)F^{ij}\left[\nabla_mh_{ij }\langle X, X_m\rangle+h_{ij}-h_{im}h_{mj}u\right]\nonumber\\
&=&-\frac 1 u \left(\frac{\sigma_1}{u}\right)\nabla_mF \langle X,
X_m\rangle-kF\frac{\sigma_1}{u^2}+\left(\frac{\sigma_1}{u}\right)F^{ij}h_{im}h_{mj}.
\end{eqnarray}

We also compute the following by lemma~\ref{lemmn},
\begin{eqnarray}\label{3m2}
\frac 1 u F^{ij}\nabla_i \nabla_j\sigma_1&=&\frac 1 u F^{ij}\nabla_s\nabla_sh_{ij}+\frac{kF}{u}|A|^2-\frac1 uF^{ij}h_{im}h_{mj}\sigma_1\nonumber\\
&=&\frac 1 u \triangle F-\frac 1uF^{ij; mq}\nabla_sh_{ij}\nabla_sh_{mq}+\frac{kF}{u}|A|^2-\frac1 uF^{ij}h_{im}h_{mj}\sigma_1\nonumber\\
&=&\frac1 u[u^p\triangle\phi +2pu^{p-1}\langle \nabla \phi, \nabla
u\rangle]+p\phi
u^{p-2}\triangle u +p(p-1)u^{p-3}|\nabla u|^2\nonumber\\
&&-\frac 1uF^{ij; mq}\nabla_sh_{ij}\nabla_sh_{mq}+\frac{kF}{u}|A|^2-\left(\frac{\sigma_1}{u}\right)F^{ij}h_{im}h_{mj}\nonumber\\
&=&\frac1 u[u^p\triangle\phi +2pu^{p-1}\langle \nabla \phi, \nabla
u\rangle]+p\phi
u^{p-2}[\nabla_m \sigma_1 \langle X,~X_m\rangle+g]\nonumber\\
&&+p(p-1)u^{p-3}|\nabla u|^2-\frac 1uF^{ij;
mq}\nabla_sh_{ij}\nabla_sh_{mq}\nonumber\\
&&+\frac{(k-p)F}{u}|A|^2-\left(\frac{\sigma_1}{u}\right)F^{ij}h_{im}h_{mj}.
\end{eqnarray}
Then , combing \eqref{3m} with \eqref{3m1}, \eqref{3m2},
\begin{eqnarray}\label{3q}
0&\geq&-\frac 1uF^{ij; mq}\nabla_sh_{ij}\nabla_sh_{mq}+(k-p)\phi
u^{p-1}|A|^2\nonumber\\
&&+p(p-1)\phi u^{p-3}|\nabla u|^2-C\sigma_1-C.
\end{eqnarray}
Then with lemma~\ref{lm1}, and
$\frac{(\sigma_k)_s}{\sigma_k}=p\frac{u_s}{u}+\frac{\phi_s}{\phi},$
$\frac{(\sigma_1)_s}{\sigma_1}=\frac{u_s}{u},$

\begin{eqnarray*}
0&\geq&(k-p)\phi u^{p-1}|A|^2+(p-1)[p-(\alpha+1)p+(\alpha-1)]\phi
u^{p-3}|\nabla u|^2-C\sigma_1-C,
\end{eqnarray*}
one has the $C^2$ estimate if $(p-1)[p-(\alpha+1)p+(\alpha-1)]\geq
0,$ which is satisfied by taking $p\leq 1$ and
$\alpha=\frac{1}{1-p}>0.$
\end{proof}

From the above certificate process, we know the key point for
proving $C^2$ estimates is the concavity of
$[\frac{\sigma_k}{\sigma_1}]^{\frac 1{k-1}},$ our lemma here has
used the convexity of $\left(\frac
{\sigma_1}{\sigma_k}\right)^{\alpha}$ on $\Gamma_k.$

\section{Proof of theorem~\ref{Thm1}}
\label{secCRT}
\subsection{Proof of theorem~\ref{Thm1}}We use the method
of continuity to prove theorem~\ref{Thm1}. For any positive function
$\phi(X)\in \M,$ and $t\in [0,1],$ setting
$$
\phi_t(X)=1-t+t\phi(X),
$$
we consider a family of equations
\begin{equation}\label{mj}
\sigma_k(A)=\langle X, ~\nu\rangle^p\phi_t(X).
\end{equation}
and
$$
I=\left\{t\in [0,1]| \textit{ Equation } \eqref{mj} \textit{ has a
smoothe admissible solution}\right\}.
$$
For $t=0,$ $X=(C_n^k)^{-\frac{1}{k-p}}$ is a solution of \eqref{mj},
i.e. $I$ is not empty. Moreover, The a prior  estimates
lemma~\ref{lemc0}, lemma~\ref{lemc0} and theorem~\ref{Thmnb} and
Evans-Krylov theorem imply the closeness  of $I.$ we prove the
following proposition that is to say $I$ is open.
\begin{proposition}
Assume $F(\lambda)$ and $\phi(x, \rho, \nabla \rho)$ satisfying
homogeneity property:
\begin{eqnarray}
&& F(\lambda(t\rho))=F(\frac{\lambda(\rho)}{t})\\
&& \phi(x, t\rho, t\nabla \rho)=t^{s}\phi(x, \rho, \nabla \rho).
\end{eqnarray}
Then the linearized operator $L$ of $F(\lambda)=\phi(x, \rho, \nabla
\rho)$ has no non zero kernel, which is from lemma 2.5 in
\cite{GLM1}.
\end{proposition}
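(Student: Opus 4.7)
The plan is to establish injectivity of the linearized operator $L$ via a maximum-principle argument applied to the ratio $w=\eta/\rho$, with the two homogeneity hypotheses providing an explicit formula for $L\rho$ that makes the argument go through. This is the same strategy used in lemma~2.5 of \cite{GLM1}, which I would mirror.

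First I would write the linearization of $F(\lambda(\rho))=\phi(x,\rho,\nabla\rho)$ at an admissible solution in the schematic form
\[
L\eta = a^{ij}\nabla_i\nabla_j\eta + b^i\nabla_i\eta + c\,\eta,
\]
where the matrix $(a^{ij})$ inherits positive-definiteness from $(F^{ij})$ on $\Gamma_k$ (so $L$ is elliptic) and $b^i,c$ come from the explicit $\nabla\rho$- and $\rho$-dependence of $\phi$. The central computation is to evaluate $L\rho$ by radial scaling: plug the one-parameter family $\rho_t=t\rho$ into the equation, use the first hypothesis together with $F$ being homogeneous of some degree $m$ in $\lambda$ to get $F(\lambda(\rho_t))=t^{-m}F(\lambda(\rho))$, and use the second hypothesis to get $\phi(x,\rho_t,\nabla\rho_t)=t^{s}\phi(x,\rho,\nabla\rho)$. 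Differentiating the identity $F(\lambda(\rho_t))-\phi(x,\rho_t,\nabla\rho_t)=0$ in $t$ at $t=1$ and invoking $F=\phi$ on the solution yields
\[
L\rho = -(m+s)\,\phi,
\]
a nonzero function of definite sign provided $m+s\neq 0$.

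Given any $\eta$ with $L\eta=0$, I would set $w=\eta/\rho$, substitute $\eta=w\rho$ into $L\eta$, and expand $\nabla_i\eta$ and $\nabla_i\nabla_j\eta$ by the product rule. All zeroth-order-in-$w$ contributions collect into the single term $w\cdot L\rho$, so $w$ satisfies a reduced elliptic equation
\[
\rho\,a^{ij}\nabla_i\nabla_j w + \widetilde{b}^i\nabla_i w + (L\rho)\,w = 0
\]
on the closed manifold $\mathbb{S}^n$, where $\widetilde{b}^i = 2a^{ij}\nabla_j\rho + \rho b^i$. Since $\rho>0$ and $L\rho$ has a fixed sign, evaluating at the maximum and minimum of $w$ and applying the strong maximum principle forces $w\equiv 0$, hence $\eta\equiv 0$.

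The main obstacle I anticipate is verifying the sign condition $m+s\neq 0$ uniformly along the continuity family \eqref{mj}: one must check that the combined degree of $F$ in $\lambda$ and the effective degree of the right-hand side in $(\rho,\nabla\rho)$ never cancels throughout the deformation, so that the zeroth-order coefficient in the equation for $w$ remains usable in the maximum principle. Once this is in place, no further estimates beyond those already established in sections~\ref{sec2}--\ref{sec4} are required, and the openness of $I$ follows from standard elliptic Fredholm theory.
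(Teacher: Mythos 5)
Your overall strategy is exactly the one the paper (implicitly) relies on: the paper offers no argument of its own beyond citing Lemma~2.5 of \cite{GLM1}, and that lemma is precisely the scaling identity $L\rho=-(m+s)\phi$ combined with the substitution $w=\eta/\rho$ and the maximum principle on $\mathbb{S}^n$. Your derivation of the reduced equation for $w$ and of $L\rho$ via Euler's relation is correct.

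There is, however, a genuine sign gap in the way you close the argument. First, for a radial graph the second fundamental form depends on $-\nabla_i\nabla_j\rho$ (schematically $h_{ij}=w^{-1}(-\rho\rho_{ij}+2\rho_i\rho_j+\rho^2\delta_{ij})\cdot(\cdots)$), so the principal part $a^{ij}=\partial F(\lambda(\rho))/\partial\rho_{ij}$ is \emph{negative} definite, not positive definite as you assert. Second, and more importantly, ``$L\rho$ has a fixed sign'' is not enough for the maximum principle: a zeroth-order coefficient of the wrong sign relative to the principal part gives nothing (compare $\Delta w+w=0$ on the sphere, which has a large kernel). What is needed is that $(L\rho)/(-a^{ij})$ be negative, i.e.\ here $L\rho=-(m+s)\phi\ge 0$, equivalently $m+s\le 0$. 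In the application $m=k$ and $s=p-(n+1)$, so $m+s=k+p-n-1\le 0$ precisely because $k\le n$ and $p\le 1$; the two sign reversals (negative-definite $a^{ij}$ and nonpositive $m+s$) cancel and the argument does close — but only with strict inequality, which fails in the borderline case $k=n$, $p=1$ (the Alexandrov-type case), where the zeroth-order term vanishes and injectivity does not follow from this argument. You should state the sign condition as $m+s<0$ (with the correct orientation of ellipticity) rather than $m+s\neq 0$. A further caveat for the application, which affects the paper rather than your proof of the proposition as stated: along the continuity path $\phi_t=1-t+t\phi$ the right-hand side is a sum of terms of different homogeneities in $\rho$, so the hypothesis $\phi(x,t\rho,t\nabla\rho)=t^{s}\phi(x,\rho,\nabla\rho)$ is not literally satisfied for $0<t<1$; one must instead verify directly that $L\rho$ retains the favorable sign, which your ``main obstacle'' remark correctly anticipates.
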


Lastly, the uniqueness result of such problem is same as lemma 2.4
in \cite{GLM1}.  We have complete the proof of theorem~\ref{Thm1}.

\section{some discuss about Ivochkina's problem}
Ivochkina\cite{I1,I} had considered the generalized type of
curvature equation, see also \cite{CNSI}v and their references,
\begin{equation}
\sigma_k(\lambda)=\phi(x, g, Dg),
\end{equation}
where $\lambda=(\lambda_1, \cdots, \lambda_n)$ is the principal
curvatures of the graph
$$
\M=\{(x, g(x))| x\in \Omega\}.
$$
In Ivochkina's notation, $\phi(x, g, Dg)=\frac{f(x, g,
Dg)}{(1+|Dg|^2)^{\frac k 2}},$ she need  her condition (1.5) in
\cite{I1}(see also (8.28) in \cite{I}) to do $C^2$ estimate. An
example is
$$
f(x, g, Dg)= H(x, g)(1+|Dg|^2)^s,
$$ for $s\geq \frac{k}{2}.$
Please note that there is a misprint at page 334 in \cite{I1} for
that example, that is no her (1.5) for $s \ge 1/2.$ The author want
to thank for Ivochkina's mention for that.

Here we give a $C^2$ estimate for a special case $\phi(x, u, Du),$
we consider the example which is similar to  (2.6) and (2.7) in
\cite{Ta}.

\begin{equation}\label{Epx}
\sigma_k(A)=\frac{H(x, g)}{(1+|Dg|^2)^{\frac q 2}},
\end{equation}
for this example, Ivochkina need
$$
s=\frac{k-q}{2}\geq \frac k2\Leftrightarrow q\leq 0.
$$
{\bf Proof of Theorem~\ref{thmdp}:}
\begin{proof}
 As \cite{SUW}, taking a local
orthonormal frame field $e_1,\cdots, e_n$ defined on $\M=\{(x,
u(x))| x\in \Omega\}$ in a neighbourhood of the point at which we
are computing and the upward unit normal vector field is
$$
\nu=\frac{(-Dg, 1)}{\sqrt{1+|Dg|^2}}\triangleq \frac{(-Dg, 1)}{w}.
$$
We consider test function $\widehat{W}= w\sigma_1 h(\frac{w^2}{2})$
for $h(t)>0$ to be determined, and it attain its interior maximum at
$X_0.$

By  Lemma 2.1 in \cite{U},
\begin{equation}\label{le}
\nabla_i \nabla_j w=wh_{im}h_{mj}+2\frac{\nabla_i w \nabla_j
w}{w}+w^2\langle \nabla h_{ij},~ E_{n+1}\rangle,
\end{equation}
where $A=[h_{ij}],$ $E_{n+1}$ is the $n+1$-st coordinate vector in
$\R^{n+1}.$
\begin{equation}\label{mbv}
0 = \nabla_i \ln \widehat{W}=\frac{ \nabla_i w}{w}+\frac{
h'w\nabla_i w}{h}+\frac{ \nabla_i \sigma_1}{\sigma_1},
\end{equation}

and
\begin{eqnarray}\label{mnbv}
0&\geq&F^{ij}\nabla_i \nabla_j \ln \widehat{W} \nonumber\\
&=&\left(\frac 1 w+\frac{h'w}{h}\right)F^{ij} \nabla_i \nabla_j w
+F^{ij}\frac{\nabla_i \nabla_j \sigma_1}{\sigma_1} \nonumber\\
&&-\left(\frac{2}{w^2}-\frac{w^2 h''}{h}
+\frac{2h'^2w^2}{h^2}+\frac{h'}{h}\right)F^{ij}\nabla_i w\nabla_j w  \\
&=&\left(1+\frac{h'w^2}{h}\right)F^{ij}h_{mi}h_{mj}
+F^{ij}\frac{\nabla_i \nabla_j \sigma_1}{\sigma_1} \nonumber\\
&&+\left(\frac{w^2 h''}{h}
-\frac{2h'^2w^2}{h^2}+\frac{h'}{h}\right)F^{ij}\nabla_i w\nabla_j w
\nonumber\\
&&+(w+\frac{h'w^3}{h})\langle \nabla  F,~ E_{n+1}\rangle.\nonumber
\end{eqnarray}
By (2.9) in \cite{U},
$$
F^{ij}\frac{\nabla_i \nabla_j \sigma_1}{\sigma_1}=-\frac{F^{ij,
pq}\nabla_\alpha h_{ij}\nabla_\alpha
h_{pq}}{\sigma_1}+F^{ij}h_{ij}\frac{|A|^2}{\sigma_1}-F^{ij}h_{mi}h_{mj}+\frac{\triangle
F}{\sigma_1},
$$
this combines with \eqref{mnbv}, one has

\begin{eqnarray}\label{12bv}
0&\geq&\frac{h'w^2}{h}F^{ij}h_{mi}h_{mj} -\frac{F^{ij,
pq}\nabla_\alpha h_{ij}\nabla_\alpha
h_{pq}}{\sigma_1}+F^{ij}h_{ij}\frac{|A|^2}{\sigma_1}\nonumber\\
&&+\left(\frac{w^2 h''}{h}
-\frac{2h'^2w^2}{h^2}+\frac{h'}{h}\right)F^{ij}\nabla_i w\nabla_j w
\nonumber\\
&&+(w+\frac{h'w^3}{h})\langle \nabla  F,~
E_{n+1}\rangle+\frac{\triangle F}{\sigma_1}.\nonumber
\end{eqnarray}
Taking
$$
h(t)=e^{\frac{t}{2\sup w^2}},
$$
we have
$$
\frac{w^2 h''}{h} -\frac{2h'^2w^2}{h^2}+\frac{h'}{h}\geq 0.
$$
Thus

\begin{eqnarray}\label{waz}
0&\geq&\frac{h'w^2}{h}F^{ij}h_{mi}h_{mj} -\frac{F^{ij,
pq}\nabla_\alpha h_{ij}\nabla_\alpha
h_{pq}}{\sigma_1}+F^{ij}h_{ij}\frac{|A|^2}{\sigma_1}\\
&&+(w+\frac{h'w^3}{h})\langle \nabla  F,~
E_{n+1}\rangle+\frac{\triangle F}{\sigma_1}.\nonumber
\end{eqnarray}
From \eqref{Epx}, we set
$$
F=\sigma_k(A)=\frac{H(x, g)}{(1+|Dg|^2)^{\frac q 2}},
$$
where $H(x, g)$ does not impact the process of proof in the follows,
we may consider the following special case for simplifying  the
denotation,

\begin{equation}\label{xu}
F=\sigma_k(A)=(1+|Dg|^2)^{-\frac q 2}=w^{-q},
\end{equation}
inserting this into \eqref{waz}, and noticing \eqref{le} and
\eqref{mbv},

\begin{eqnarray}\label{wqz}
0&\geq&\frac{h'w^2}{h}F^{ij}h_{mi}h_{mj} -\frac{F^{ij,
pq}\nabla_\alpha h_{ij}\nabla_\alpha
h_{pq}}{\sigma_1}+(k-q)F\frac{|A|^2}{\sigma_1}\\
&&+\frac{(q^2-q)w^{-(2+q)}|\nabla w|^2}{\sigma_1}-C,\nonumber
\end{eqnarray}
for $\sigma_1>> 1.$

On the other hand, lemma~\ref{lm1} and \eqref{xu} implies

\begin{eqnarray}\label{z}
-\frac{F^{ij, pq}\nabla_\alpha h_{ij}\nabla_\alpha
h_{pq}}{\sigma_1}&\geq&\frac{w^{-(2+q)}|\nabla
w|^2}{\sigma_1}\{-(q-1)^2\alpha-q^2+1\nonumber\\
&&+\left[(q-1)(2\alpha-1)+1+q\right]\frac{h'}{h}w^2\\
&&+(1-\alpha)\frac{h'^2}{h^2} w^4\}.\nonumber
\end{eqnarray}
Taking $\alpha=\frac{1}{1-q}, $ if $q<1$  and $0<\alpha<1, $ if
$q=1,$ then
\begin{eqnarray}\label{mw}
-\frac{F^{ij, pq}\nabla_\alpha h_{ij}\nabla_\alpha
h_{pq}}{\sigma_1}+\frac{(q^2-q)w^{-(2+q)}|\nabla w|^2}{\sigma_1}\geq
0.
\end{eqnarray}
Lastly, the $C^2$ estimates  \eqref{mk} is from \eqref{wqz} and
\eqref{mw}.

\end{proof}

\begin{remark}
$\frac{h'w^2}{h}F^{ij}h_{mi}h_{mj}$ is a good term for our estimate
in \eqref{wqz}. Ivochkina has used it in \cite{I1, I}. one may use
it to control the term  like $-\frac{w^{-(2+q)}|\nabla
w|^2}{\sigma_1}$ and refine theorem~\ref{thmdp}.
\end{remark}

\begin{remark}
Takimoto\cite{Ta} had used a priori estimates of the second
derivatives of $u$ in his (2.6) and (2.7) for $1\leq q\leq k-1$ at
page 368. So his result is incomplete.
\end{remark}

\begin{remark}
In the end, an interesting problem is what we can generalize
Ivochkinas' $C^2$ estimates  in \cite{I1, I} to the following
quotient curvature equations?
\begin{equation}\label{LS}
\frac{\sigma_k}{\sigma_l}(A)=H(x, g)w^{-q},
\end{equation}
where $0\leqq l<k\leqq n.$ Of course, it is also interesting for
generalized Guan-Li-Li's results to quotient equations.
\end{remark}

\begin{remark}
More recently, we have found that  Chuanqiang Chen had obtained
$C^2$ estimates for our problem in case of $k=2$ by using different
methods :``Chuanqiang Chen. A Minimal Value Problem and the
Prescribed $\sigma_2$ Curvature Measure Problem, arXiv:1104.4283"
\end{remark}

\section*{Acknowledgments}
The author is grateful to Prof. Pengfei Guan for introducing him to
this problem and for many useful discussions. The work was done
partialy while the author was visiting McGill University in 2009. He
would like to thank their warm hospitality. He also thank for Prof.
Ivochkina's discussion on her paper \cite{I1, I}.

\bibliographystyle{amsplain}

\end{document}